  \newenvironment{proof}{\vspace{1ex}\noindent{\bf Proof:}}{\hspace*{\fill}$\blacksquare$\vspace{1ex}}
  \newenvironment{proofof}[1]{\vspace{1ex}\noindent{\bf Proof of #1:}}{\hspace*{\fill}$\blacksquare$\vspace{1ex}}
  \newtheorem{theorem}{Theorem} 
  \newtheorem{lemma} [theorem] {Lemma}
  \newtheorem{corollary} [theorem] {Corollary}
\newcommand{\eN}[0]{\ensuremath{ \mathbb N}}
\newcommand{\Zed}[0]{\ensuremath{ \mathbb Z}}
\newcommand{\Pee}[0]{\ensuremath{{\mathbb P}}}
\newcommand{\Ee}[0]{\ensuremath{{\mathbb E}}}
\newcommand{\isd}[0]{\hspace{.2ex} \raisebox{-.1ex}{$=$} \hspace{-1.5ex} 
\raisebox{1ex}{{$\scriptstyle d$}} \hspace{.8ex} }
 \newcommand{\eps}{\varepsilon}
\DeclareMathOperator{\mood}{mod}
\DeclareMathOperator{\Bi}{Bi}
\DeclareMathOperator{\FO}{FO}
\DeclareMathOperator{\spec}{spec}
\DeclareMathOperator{\InC0}{InC_0}
\DeclareMathOperator{\Hedge}{Edge}
\DeclareMathOperator{\Bigger}{Bgr}
\DeclareMathOperator{\CN}{CmNb}
\DeclareMathOperator{\UniP}{UniP}
\definecolor{orange}{RGB}{255,127,0}
\definecolor{pink}{RGB}{255,150,150}
\begin{document}

\title{The first order convergence law fails for random perfect graphs}


\author{Tobias M\"uller\thanks{Bernoulli Institute, 
Groningen University, {\tt tobias.muller@rug.nl}. 
Supported in part by the Netherlands Organisation for Scientific Research (NWO) under project nos 612.001.409 and 639.032.529.
Part of this research was carried out while this author visited Barcelona supported by the BGSMath research program on Random Discrete Structures and Beyond 
that was held in Barcelona from May to June 2017.} 
\and Marc Noy\thanks{Department of Mathematics, Universitat Politècnica de Catalunya, Barcelona Graduate School of Mathematics, {\tt marc.noy@upc.edu}. 
Supported in part by grants MTM2014-54745-P and MDM-2014-0445}}
\date{\today}

\maketitle

\begin{abstract}
We consider first order expressible properties of random perfect graphs. That is, we pick a graph $G_n$ uniformly at random from all 
(labelled) perfect graphs on $n$ vertices and consider the probability that it satisfies some graph property that can be expressed
in the first order language of graphs.
We show that there exists such a first order expressible property for which the probability that $G_n$ satisfies it does not 
converge as $n\to\infty$.
\end{abstract}

%
%
%

\section{Introduction}

A graph is perfect if the chromatic number equals the clique number in each of its induced subgraphs.
Perfect graphs are a central topic in graph theory and play an important role in combinatorial optimization. 
In this paper we will study the random graph chosen uniformly at random from all (labelled) perfect graphs on $n$ 
vertices.
The first thing one might want in order to prove results about this object is a mechanism for generating random perfect 
graphs that is more descriptive than ``put all $n$-vertex perfect graphs in a bag and pick one uniformly at random''.
Such a mechanism has been introduced recently by McDiarmid and Yolov \cite{colin}. 
Before presenting it, let us discuss as a preparation a simpler subclass of perfect graphs.

A graph is chordal if every cycle of length four or more has a chord, that is, an edge joining  non-consecutive vertices in the cycle. 
A graph is split if its vertex set can be partitioned into a clique and an independent set (with arbitrary edges across the partition). 
It is easy to see that a split graph is chordal, but not conversely. On the other hand, it is known that almost all chordal graphs are 
split~\cite{chordal}, in the sense that the proportion of chordal graphs that are split tends to 1 as the number of vertices $n$ tends to infinity. 
Thus we arrive at a very simple process for generating random chordal graphs: (randomly) partition the vertex set into  a clique $A$ and an independent 
set $B$, and add an arbitrary set of edges between $A$ and $B$ (chosen uniformly at random from all posible sets of edges between $A$ and $B$). 
The distribution we obtain in this way is not uniform as a split graph may arise from different partitions into a clique and an independent set, but 
it can be seen that when the size of $A$ is suitably sampled then its total variational distance to the uniform distribution tends to zero 
as $n$ tends to infinity. 
Now we turn to random perfect graphs. 
A graph $G$ is \emph{unipolar} if for some $k\ge 0$ its vertex set $V(G)$ can be partitioned
into $k + 1$ cliques $C_0, C_1, \dots,C_k$, so that there are no edges between $C_i$ and $C_j$
for $ 1 \le i < j \le k$. Following \cite{colin} we call $C_0$ the \emph{central clique}, and the $C_i$ for $i\ge 1$  the \emph{side cliques}. 
A graph $G$ is \emph{co-unipolar} if the  complement $\overline{G}$  is unipolar; and it is a \emph{generalized split} graph if it is unipolar or co-unipolar. Notice that a graph can be both unipolar and co-unipolar, and that when the $C_i$ for $i\ge 1$ are reduced to a single vertex, a generalized split  graph is split. It can be shown that generalized split graphs are perfect, and it was proved in \cite{PSperfect} that almost all perfect graphs are generalized split.

McDiarmid and Yolov  \cite{colin} have devised the following process for generating random unipolar graphs. 
Choose an integer $m \in [n]$ according to a suitable distribution; choose a random $m$-subset $C_0 \subseteq [n]$; choose a (unifromly) random set partition  $[n] \backslash C_0=C_1 \cup \cdots \cup C_k$ of the complement and make all the $C_i$ into cliques; finally add edges between $C_0$ and $[n] \backslash C_0$ independently with probability $1/2$, and no further edges.
Again this scheme is not uniform but it is shown in \cite{colin} that it approximates the uniform distribution on unipolar graphs on $n$ vertices
up to total variational distance $o_n(1)$. 
This gives a useful  scheme for random perfect graphs: pick a random unipolar graph $G$ on $n$ vertices according to the previous scheme, and 
flip a fair coin: if the coin turns up heads then output $G$, otherwise output its complement $\overline{G}$. 
Several properties of random perfect graphs are proved in \cite{colin} using this scheme. One notable such result is that for every 
fixed graph $H$ the probability that the random perfect graph on $n$ vertices has an induced subgraph isomorphic to $H$
tends to a limit that is either $0,1/2$ or $1$.

In this paper we consider graph properties that can be expressed in the {\em first order language of graphs} ($\FO$), on random perfect graphs.
Formulas in this language are constructed using variables $x, y, \dots$ ranging over
the vertices of a graph, the usual quantifiers $\forall, \exists$, the usual logical connectives $\neg, \vee, \wedge$, etc., parentheses and the binary relations $=, \sim$,
where $x\sim y $ denotes that $x$ and $y$ are adjacent.
To aid readability we will also use commas and semicolons in the formulas in this paper.
In $\FO$ one can for instance write ``$G$ is triangle-free" as 
$\neg\exists x,y,z: (x\sim y)\wedge(x\sim z)\wedge(y\sim z)$.
We say that a graph $G$ is a model for the sentence $\varphi \in \FO$ if $G$ satisfies $\varphi$, and write $G\models \varphi$.
(A sentence is a formula in which every variable is ``bound'' to a quantifier.)

Several restricted classes of graphs  have been studied with respect to the limiting behaviour of $\FO$ properties,
and usually  one proves either a zero-one law (that is, every $\FO$ property has limiting probability $\in\{0,1\}$) or a convergence law
(that is, every $\FO$ property has a limiting probability). 
For instance, a zero-one law has been proved for trees \cite{McColm} and for graphs not containing a clique of fixed size \cite{Kt-free}, 
while a convergence law has been proved for $d$-regular graphs for fixed $d$ \cite{lynch}, and for forests and planar graphs \cite{minor-closed}.

In the light of the above mentioned result of McDiarmid and Yolov on the limiting probability of containing a fixed induced subgraph, one might 
expect the convergence law to hold for random perfect graphs, perhaps even with the limiting probabilities only taking the values $0,1/2,1$.
The main result of this paper however states something rather different is the case.

\begin{theorem}\label{thm:FO}
	There exists a sentence $\varphi \in \FO$ such that
	$$ \lim_{n\to\infty} \Pee\left[ P_n \models \varphi \right] \text{ does not exist, } $$
	where $P_n$ is chosen uniformly at random from all (labelled) perfect graphs on $n$ vertices.
\end{theorem}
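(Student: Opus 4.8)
The plan is to reduce Theorem~\ref{thm:FO} to a statement about the generative model for random unipolar graphs of \cite{colin}, and then to locate non-convergence in the block-size statistics of the random set partition that produces the side cliques. Since almost all perfect graphs are generalized split \cite{PSperfect}, and a generalized split graph is unipolar or co-unipolar, the uniform distribution on perfect graphs is, up to total variation distance $o_n(1)$, the fair coin-flip mixture of the uniform distribution on unipolar graphs with its image under complementation. By \cite{colin} the latter is within total variation $o_n(1)$ of the explicit model $U_n$ (sample a central clique $C_0$ of size $m$, a uniform set partition of the remaining vertices into side cliques, and independent $\tfrac12$-edges between $C_0$ and the rest). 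Hence for every fixed sentence $\psi$ one has $\Pee[P_n\models\psi]=\tfrac12\Pee[U_n\models\psi]+\tfrac12\Pee[\overline{U_n}\models\psi]+o_n(1)$, so it suffices to find $\varphi$ for which $\Pee[U_n\models\varphi]$ does not converge while the complementary term stabilises; I would arrange the latter by making $\varphi$ assert a configuration whose probability in $\overline{U_n}$ provably tends to a limit.

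Next I would isolate the oscillation. In the model, $m$ concentrates around $n/2$, so the side cliques come from a uniform random set partition of a set of size $N=\Theta(n)$, in which the number of blocks of size $k$ is asymptotically Poisson with mean $r^k/k!$, where $re^r=N$. Consequently the extremal block statistics (the maximum block size, and the number of blocks attaining it) have \emph{no} limiting distribution: the controlling threshold sits at the non-integer level $k\approx er\asymp\log N$, whose fractional part is equidistributed modulo $1$ as $N\to\infty$, exactly as for the maximum of a growing number of independent integer-valued variables. This equidistribution is the engine of non-convergence.

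The heart of the argument is to make this oscillation visible to first order logic, which can only count up to fixed constants. The device I would use is the random bipartite edge-set between $C_0$ and the side cliques: a side clique of size $s$ has, in expectation, $|C_0|\,2^{-s}$ common neighbours inside $C_0$, so this common-neighbourhood acts as a $\log_2$-scale ruler reading off clique sizes near the critical scale $s\approx\log_2|C_0|$. The key point is that in the top window $s=\log_2|C_0|-t$ the common-neighbour count is $\approx 2^t$, a \emph{bounded} integer encoding the offset $t$ from the top; thus FO can in principle detect ``which level from the top'' a side clique lives on through a bounded count. Using quantifier alternation over this ruler I would express a bounded event — for instance, that the highest extendable level is attained by a unique side clique — by a sentence $\varphi$, and then show by a Poisson/second-moment computation on $U_n$ that $\Pee[U_n\models\varphi]$ tracks the oscillating extremal event above.

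The step I expect to be the main obstacle is precisely this translation: although the common-neighbour \emph{count} is bounded, the side clique carrying it has size of order $\log n$, so one cannot simply quantify over all its vertices with a sentence of bounded depth. Resolving this requires an FO way to \emph{pin down} a side clique and certify its level from the top purely through the bounded common-neighbour data and the maximal-clique structure of $U_n$, and then to prove that the resulting bounded proxy is equivalent, up to $o(1)$, to the oscillating event. Controlling the Poisson-approximation errors uniformly across the critical window of clique sizes, and verifying that the co-unipolar contribution $\Pee[\overline{U_n}\models\varphi]$ converges and hence does not cancel the oscillation, are the remaining technical points.
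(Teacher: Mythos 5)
Your reduction to the McDiarmid--Yolov unipolar model is sound and matches the paper (the paper makes the complement term vanish by conjoining with a sentence $\UniP$ that holds with high probability exactly on the unipolar half, via a fixed unipolar-but-not-co-unipolar induced subgraph). Your identification of the common neighbourhood $N(C_i)\subseteq C_0$ of a side clique as the one place where bounded, FO-visible integers appear is also exactly the device the paper uses. The genuine gap is in your proposed engine of non-convergence: a scale mismatch makes the oscillating extremal block statistics invisible to first order logic. The lattice oscillation you invoke lives at the maximum block size of the set partition, which is $\approx er\approx e\ln n\approx 1.88\log_2 n$; a side clique of that size has expected common-neighbour count $|C_0|2^{-s}\approx n^{-0.88}$, so with high probability \emph{every} clique within $O(1)$ of the maximum size has empty common neighbourhood, and all its bounded subsets have huge, concentrated common neighbourhoods. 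Nothing of bounded quantifier depth distinguishes the largest side clique from the second largest. Conversely, at the scale $s\approx\log_2 n$ where your ``ruler'' does return bounded readings, the number of blocks of each relevant size is $n^{\Omega(1)}$ and concentrated (this is the paper's Lemma~\ref{lem:rightsize}), so there is no extremal oscillation there to detect. Your plan to ``certify the highest extendable level'' therefore has nothing oscillating to latch onto.

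The paper's actual source of non-convergence is entirely different: it is logical, not probabilistic. The common neighbourhoods are used only as small vertex sets (of every size up to $\ell=\lceil\ln\ln\ln n\rceil$, all realized $n^{\Omega(1)}$ times), on which \emph{arbitrary} graphs can be interpreted via a third clique $C_k$ (the realization Lemma~\ref{lem:realise} and the formulas $\InC0$, $\CN$, $\Hedge$, $\Phi$). The oscillation is then imported from the Shelah--Spencer spectrum sentences $\varphi_0,\varphi_1$, whose satisfiability on an $\ell'$-vertex set depends on $\log^*\ell'\bmod 100$; since $\log^*\ell$ tracks $\log^* n$ to within $10$, the limiting probability differs along the subsequences $\log^* n\equiv 25$ and $\log^* n\equiv 75 \pmod{100}$. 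A comparison formula $\Bigger$ (built from matchings realized by yet another clique) orders the common-neighbourhood sizes up to $\ell$, which is what lets a single sentence pit $\varphi_1$ at one level against $\varphi_0$ at all higher levels. If you want to salvage your outline, you would need to replace the set-partition extremal statistics by some such externally supplied non-convergent spectrum; without that ingredient the argument does not go through.
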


This is in stark contrast to random chordal graphs. The scheme we discussed above based on random  split graphs is in fact  
very similar to the binomial bipartite random graph with independent edge probabilities  equal to $1/2$.  
A standard argument shows that in fact a zero-one law holds in this case, that is, the limiting probability that a $\FO$ property 
is satisfied tends either  to 0 or 1 as $n \to \infty$ \cite{spencer}.
  
Our proof of Theorem \ref{thm:FO} draws on the techniques introduced in the proof of the celebrated Shelah-Spencer result of non-convergence 
in the classical $G(n,p)$ model when $p=n^{-\alpha}$ and $\alpha \in (0,1)$ is a rational number~\cite{ShelahSpencer} (see also \cite{spencer}). 
In fact, it is the richness of unipolar graphs together with the properties of random set partitions that allow us to produce a non-convergent 
first order sentence.  \\ 


In addition we prove the following undecidability result. 

\begin{theorem}\label{thm:undecidable}
	There does not exist an algorithm that, given as input a $\varphi \in \FO$ that is guaranteed to
	have either limiting probability zero or limiting probability one, decides whether the limiting probability 
	equals one.
	
\end{theorem}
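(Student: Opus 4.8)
The plan is to prove Theorem~\ref{thm:undecidable} by reducing an undecidable problem — concretely, the halting problem (for a fixed machine run on empty input), or equivalently, via Trakhtenbrot's theorem, the finite satisfiability problem for $\FO$ — to the promise problem in the statement. That is, I would exhibit a computable map $M \mapsto \varphi_M$ sending a Turing machine $M$ to a sentence $\varphi_M \in \FO$ with two properties: (a) $\lim_{n\to\infty}\Pee[P_n\models\varphi_M]$ exists and lies in $\{0,1\}$, so that $\varphi_M$ meets the promise by construction; and (b) the limit equals $1$ precisely when $M$ halts. An algorithm solving the promise problem would then decide halting, which is impossible. The reason such a reduction is plausible here, and not for random chordal graphs, is the same phenomenon behind Theorem~\ref{thm:FO}: the side cliques of a random unipolar graph realise a uniform random set partition whose block-size statistics are rich and ``tunable'' enough to carry combinatorial information that $\FO$ can read off.

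For the building blocks I would reuse the machinery developed for Theorem~\ref{thm:FO}. That proof isolates families of $\FO$-expressible local events about the side cliques — presence of a side clique of a prescribed size, prescribed counts of small side cliques, prescribed adjacency patterns between a side clique and the central clique $C_0$ (the latter being independent fair coin flips) — whose probabilities are governed precisely by the set-partition profile. These serve as nondeterministic ``cells'': the unboundedly many side cliques supply an unbounded reservoir of vertices, the uniform random bipartite edges to $C_0$ attach independent unbiased bits to them, and any fixed finite decorated configuration appears asymptotically almost surely (a.a.s.) whenever it is realisable inside a unipolar graph, by a first/second-moment computation using that the number of side cliques of each relevant size tends to infinity and concentrates. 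The sentence $\varphi_M$ then asserts the existence of an embedded \emph{halting computation} of $M$: cells encode tape symbols and head/state information via adjacencies, first-order clauses enforce local step-to-step and cell-to-cell consistency of the transition function, and a final clause asserts that a halting configuration is reached.

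The promise holds by design. If $M$ halts, its computation is a \emph{finite} object; for all large $n$ there is room to embed it, and by the appearance estimate it is embedded a.a.s., so the limit is $1$. If $M$ never halts, no valid halting computation exists at any size, so — provided the consistency checks are \emph{faithful}, i.e.\ they accept only genuine computations — $\varphi_M$ is false for every $n$ and the limit is $0$. To force the limit to $1$ rather than $\tfrac12$ I would take $\varphi_M = \varphi_M^{\mathrm{uni}} \vee \varphi_M^{\mathrm{co}}$, where $\varphi_M^{\mathrm{co}}$ is obtained from $\varphi_M^{\mathrm{uni}}$ by replacing every $x\sim y$ with $x\neq y\wedge \neg(x\sim y)$; since $P_n$ is unipolar with probability $\tfrac12$ and co-unipolar with probability $\tfrac12$, the relevant disjunct fires a.a.s.\ on the corresponding half, while faithfulness keeps both disjuncts false in the non-halting case.

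The crux — and the step I expect to be the main obstacle — is encoding a computation of a priori \emph{unbounded} length using a \emph{single} bounded-quantifier sentence. A naive ``$\exists v_1,\dots,v_T$'' is illegal because $T$ is not fixed, so one cannot simply existentially guess a model of the associated formula; this is exactly why a bare reduction from finite satisfiability does not compile into one $\FO$ sentence and why a genuine tableau encoding is needed. The hard part is to make the grid of the computation tableau \emph{first-order definable} inside the random structure, a.a.s.: one must produce $\FO$-definable ``next time step'' and ``next tape cell'' relations on an a.a.s.-present, arbitrarily large portion of $P_n$, using only adjacency, so that correctness of a full computation reduces to a \emph{universally} quantified local check rather than to guessing unboundedly many witnesses. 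I would build these successor relations out of the side-clique sizes and the $C_0$-adjacency addresses furnished by the set-partition analysis, and the bulk of the work is to verify both that such addressable grids exist a.a.s.\ and that the verification is faithful, so that the clean $0/1$ dichotomy, and hence the promise, is preserved. An alternative, possibly shorter, route is to transfer an undecidability statement through the same Shelah--Spencer correspondence already used for Theorem~\ref{thm:FO}, but the direct tableau encoding is the more self-contained option.
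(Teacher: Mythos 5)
Your high-level strategy (reduce an undecidable problem to the promise problem) is the right one, and your observation about forcing the limit to $1$ rather than $\tfrac12$ by taking a disjunction with the edge-complemented sentence matches what the paper does when it passes from $P_n$ to $G_n$. But your main route -- directly encoding a halting computation of a Turing machine as a tableau inside the random graph -- has a genuine gap that you yourself identify and do not close: you need $\FO$-definable, arbitrarily large, faithfully verifiable ``grid'' structures (next-time-step and next-tape-cell successor relations) to exist a.a.s.\ in $G_n$, and nothing in your sketch establishes this. This is not a routine verification; it is the entire difficulty of the theorem, and it is exactly the work that a reduction from the halting problem forces you to do from scratch. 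As written, the proposal is a plan whose crux is unresolved.

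The paper avoids this difficulty entirely by reducing from \emph{finite satisfiability} via Trakhtenbrot's theorem, so that all the computational heavy lifting is outsourced to a known undecidability result and what remains is a purely combinatorial interpretation argument. Concretely: Corollary~\ref{cor:transform} turns any sentence $\varphi$ into a formula $\Phi(x,y)$ asserting that the ``virtual graph'' $H(N(C_i),C_j)$ (vertices the common neighbours of $C_i$ in $C_0$, edges witnessed by common neighbours in $C_j$) models $\varphi$; Lemma~\ref{lem:rightsize} shows that a.a.s.\ there are side cliques whose common neighbourhoods have every size up to $\ell=\lceil\ln\ln\ln n\rceil$; and Lemma~\ref{lem:realise} shows that a.a.s.\ \emph{every} target graph on such a vertex set is realised as $H(\cdot,C_k)$ for some $k$. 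Hence $\psi:=\exists x,y:\Phi(x,y)$ has limiting probability $1$ if $\spec(\varphi)\neq\emptyset$ and $0$ otherwise, and deciding which would decide finite satisfiability. Note that quantifying existentially over a single bounded pair $(x,y)$ -- i.e.\ over virtual graphs rather than over computations -- is what sidesteps the unbounded-witness problem you flag as the obstacle. You gesture at this alternative in your opening sentence and in your closing remark, but the route you actually commit to is the harder one, and the proposal would need either to carry out the full tableau construction inside $G_n$ or to switch to the interpretation argument to become a proof.
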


For more discussion and open problems we refer the reader to Section~\ref{sec:discuss}

\section{Preliminaries}

Throughout this paper, we will say that a sequence of events $E_1, E_2, \dots$ holds {\em with high probability} 
if $\lim_{n\to\infty} \Pee( E_n ) = 1$.

Recall that the log-star function 

\[ \log^* n := \min\{ k \in \Zed_{\geq 0} : T(k) \geq n \}, \] 

\noindent
is the least integer $k$ for which $T(k)$ is at least $n$, where $T(.)$ denotes the tower function -- which can be 
defined recursively by $T(0)=1$ and $T(n+1) = 2^{T(n)}$. Put differently, $T(n)$ is a ``tower'' of $2$s of height $n$ and 
$\log^* n$ is the number of iterations of the base two logarithm that are needed to reduce $n$ to one or less.

The {\em spectrum}  $\spec(\varphi)$ of a sentence $\varphi \in \FO$ is the set of all $n \in \eN$ for which there exists 
a graph on $n$ vertices that satisfies $\varphi$, that is
$$ \spec(\varphi) := \{ v(G) : G \models \varphi \}. $$

The following lemma is a straightforward adaptation of a construction of Shelah and Spencer~\cite{ShelahSpencer}

\begin{lemma}\label{lem:logloglemma}
There exist $\varphi_0, \varphi_1 \in \FO$ such that 
$$ 
\renewcommand*{\arraystretch}{1.7}
\begin{array}{ll}
&\log^* n \mood 100 \in \{2, \dots 49\} \Rightarrow n \in \spec(\varphi_0) \setminus \spec(\varphi_1),\\  
&\log^* n \mood 100 \in \{52, \dots 99\} \Rightarrow n \in \spec(\varphi_1) \setminus \spec(\varphi_0).  \\
\end{array} 
$$
\end{lemma}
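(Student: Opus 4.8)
The plan is to adapt the iterated--powerset gadget of Shelah and Spencer \cite{ShelahSpencer} to build a single sentence $\Phi\in\FO$ whose finite models are exactly \emph{graded powerset towers} carrying a cyclic colouring of period $100$; the sentences $\varphi_0,\varphi_1$ are then obtained from $\Phi$ by additionally prescribing the colour of the top level. Concretely, I would reserve a \emph{palette} of $100$ marker vertices $p_0,\dots,p_{99}$, made first order identifiable by a fixed local gadget, and declare the \emph{colour} of a non-palette (\emph{structural}) vertex $v$ to be the unique $c$ with $v\sim p_c$. A \emph{membership} relation is then definable from the graph by orienting edges with the colouring: set $u\in v$ iff $u\sim v$ and $\mathrm{col}(v)\equiv\mathrm{col}(u)+1\pmod{100}$, while forbidding structural edges between equal or non-consecutive colours. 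In this way both ``$u\in v$'' and the derived predicate $\mathrm{sameLevel}(x,y):=\exists w\,(x\in w\wedge y\in w)$ (``$x,y$ share a parent'', which on non-top levels coincides with being on a common level) are first order.

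The heart of $\Phi$ is a list of \emph{local} first order axioms forcing each level to be the full powerset of the one below: a \emph{base} of a fixed constant number $a_0=T(d_0)$ of colour-$0$ vertices having no members; \emph{extensionality}, $\forall v,v'\,\big[\mathrm{sameLevel}(v,v')\wedge\forall u\,(u\in v\leftrightarrow u\in v')\to v=v'\big]$; \emph{empty-set existence} at each non-top level; and \emph{add-one closure}, $\forall v,x\,\big[(\exists u\,u\in v)\wedge(\exists u,w\,(u\in v\wedge u\in w\wedge x\in w))\to\exists v'\,(\forall z\,(z\in v'\leftrightarrow(z\in v\vee z=x)))\big]$. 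Crucially the colour increases by exactly one along each membership edge and colour-$0$ vertices have no members; since any membership cycle would have length divisible by $100$ and hence contain a colour-$0$ vertex, membership is acyclic, so in a \emph{finite} model it is well-founded with the base as its minimal elements. An induction on the (finite) maximal chain length then shows that the only finite models are genuine graded towers in which level $i$ has colour $i\bmod 100$ and the tower size $T(i+d_0)$, and in which the top level --- where I require extensionality but \emph{not} the powerset axioms --- may be any extensional family, of any size $t\in\{1,\dots,2^{a_{L-2}}\}$. Establishing this ``finite models are exactly towers'' statement is the main obstacle, and is precisely where the Shelah--Spencer machinery is being reused.

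Granting this, the combinatorics is forced. Writing $S_L=\sum_{i<L}T(i+d_0)$ for the number of vertices strictly below the top, a height-$L$ tower has between $S_{L-1}+1$ and $S_{L-1}+2^{a_{L-2}}=S_L$ vertices, so the intervals $(S_{L-1},S_L]$ \emph{tile} the integers; hence the height $L=L(n)$, and with it the top colour $(L-1)\bmod 100$, is determined by $n$ alone, and for every $n$ above a fixed constant $\Phi$ has an $n$-vertex model. Moreover any \emph{non-empty} top vertex has a member on level $L-2$ (colour $(L-2)\bmod 100$), so its colour is forced to be $(L-1)\bmod 100$. I then set
\[ \varphi_0 := \Phi \wedge \exists v\,\big[\text{$v$ is a non-empty top vertex and }\mathrm{col}(v)\in R_0\big], \qquad \varphi_1 := \Phi \wedge \exists v\,\big[\,\cdots\in R_1\,\big], \]
with $R_0=\{1-d_0,\dots,49-d_0\}$ and $R_1=\{51-d_0,\dots,99-d_0\}\pmod{100}$ two disjoint blocks of $49$ residues.

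Finally I would match this to $\log^*$. Because $S_{L-1}<T(L-1+d_0)<S_L$, the interval $(S_{L-1},S_L]$ straddles the single tower value $T(L-1+d_0)$, so for $n$ of height $L$ one has $\log^* n\in\{L-1+d_0,\,L+d_0\}$; equivalently the forced top colour $(L-1)\bmod 100$ is one of the two \emph{consecutive} residues $\{\log^* n-d_0-1,\ \log^* n-d_0\}\bmod 100$. A direct check now finishes the proof: if $\log^* n\bmod 100\in\{2,\dots,49\}$ then both candidate residues lie in $R_0$ and neither in $R_1$, so a suitable $n$-vertex model (with a non-empty top vertex) satisfies $\varphi_0$ while no model satisfies $\varphi_1$, giving $n\in\spec(\varphi_0)\setminus\spec(\varphi_1)$; the case $\{52,\dots,99\}$ is symmetric with $R_1$. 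The four residues $\{0,1,50,51\}$ excluded in the statement are exactly those for which this two-candidate window straddles the gap between $R_0$ and $R_1$, which is why they must be left as a buffer.
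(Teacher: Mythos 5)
Your overall strategy---an iterated-powerset tower whose height is read off modulo $100$, a partial top level so that the spectra tile the integers, and the two-candidate window $\{\log^* n-d_0-1,\ \log^* n-d_0\}$ explaining the buffer residues $\{0,1,50,51\}$---is exactly the Shelah--Spencer route; the paper's own proof is nothing more than a pointer to Spencer's explicit construction of $\varphi_0$ (pages 112--113 of \cite{spencer}) together with the remark that $\varphi_1$ is the obvious adaptation. Your interval bookkeeping $S_{L-1}<T(L-1+d_0)\le S_L$ and the choice of $R_0,R_1$ are correct.

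There is, however, a genuine gap in the mechanism you propose for the membership relation: you make the mod-$100$ colouring do two incompatible jobs. It orients membership ($u\in v$ iff $u\sim v$ and $\mathrm{col}(v)\equiv\mathrm{col}(u)+1$) \emph{and} it guarantees acyclicity via ``colour-$0$ vertices have no members, and every membership cycle meets colour $0$''. But the lemma must cover arbitrarily large $\log^* n$, hence towers of height $L>100$, and in such a tower level $100$ has colour $0$ and its vertices certainly have members---contradicting that axiom. If you weaken it to ``only the base vertices are memberless'', acyclicity is no longer forced: an isolated $100$-cycle coloured $0,1,\dots,99$ satisfies extensionality and add-one closure vacuously and can be glued onto any model, perturbing the vertex count and destroying the ``no $n$-vertex model of $\varphi_{1-i}$'' half of the lemma. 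The same wraparound defeats the classification of models: an edge from a colour-$0$ vertex on level $100$ to a colour-$1$ vertex on level $1$ is read as a membership edge, so ``level'' is not recoverable from the colour, and your induction on chain length has nothing to induct on. The repair (and what the cited construction in effect does) is to define membership and prove well-foundedness by a device independent of the cyclic colouring, and only then let the colouring ride along to record the level residue. Two smaller defects: the add-one closure carries the guard $\exists u\,(u\in v)$ and therefore cannot bootstrap from the empty set, so singleton existence must be a separate axiom; and the minimal elements of the membership order are not just the base but also the empty sets at every level, so ``well-founded with the base as its minimal elements'' is false as stated.
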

We remark that $\varphi_0$ is constructed explicitly in~\cite{spencer}, pages 112--113, and that  $\varphi_1$ is a straightforward adaptation 
of this construction.

We also need the following  consequence of a more general theorem of Trakhtenbrot \cite{trakhtenbrot} (see also \cite[page 303]{purple-book}) on undecidability 
in first order logic. 

\begin{theorem}[Trakhtenbrot]
There does not exist an algorithm that can decide, given an arbitrary sentence $\varphi \in \FO$ as input, whether 
$\spec(\varphi) = \emptyset$. \\
(That is, whether or not $\varphi$ is satisfied by at least one finite graph.)
\end{theorem}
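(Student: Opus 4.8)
The plan is to treat the statement as a consequence of the general Trakhtenbrot theorem and to transfer undecidability from an arbitrary binary signature to the (symmetric, irreflexive) signature of graphs. Concretely, fix the signature $\sigma$ consisting of a single binary relation symbol $R$, for which finite satisfiability of $\FO$-sentences is undecidable. I would exhibit a computable translation $\psi \mapsto \hat{\psi}$ sending each $\sigma$-sentence $\psi$ to a graph sentence $\hat{\psi} \in \FO$ such that $\psi$ has a finite model if and only if $\spec(\hat{\psi}) \neq \emptyset$. Since the former is undecidable, so is the latter, which is exactly the assertion of the theorem.

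The heart of the argument is an \emph{interpretation} of $\sigma$-structures in graphs. Given a finite $\sigma$-structure $\Acal$ with universe $A$ and relation $R^{\Acal} \subseteq A \times A$, I build a graph $G_{\Acal}$ as follows. Each $a \in A$ becomes a distinguished \emph{element vertex}, tagged so that it is first-order recognisable (for instance by attaching to it a small rigid gadget of a fixed shape that no other vertex carries, making ``being an element vertex'' an $\FO$-definable predicate $\mathrm{El}(x)$). Each ordered pair $(a,b) \in R^{\Acal}$ is encoded by an auxiliary \emph{direction gadget} joining $a$ and $b$ -- an asymmetric configuration (for example a short path with one internal vertex decorated differently from the other) from which the ordered pair $(a,b)$, and not merely the unordered pair, can be read off by an $\FO$-formula $\rho(x,y)$. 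The sentence $\hat{\psi}$ is then the conjunction of a \emph{well-formedness} sentence $\mathrm{WF}$, asserting that the ambient graph really is of the form $G_{\Acal}$ for some $\sigma$-structure $\Acal$ (correct gadgets, no stray vertices or edges, every gadget attached to element vertices in the prescribed way), together with the relativisation of $\psi$ in which every quantifier is restricted to $\mathrm{El}(\cdot)$ and every atomic formula $R(x,y)$ is replaced by $\rho(x,y)$.

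Correctness is then a routine bi-implication: if $\Acal \models \psi$ is finite then $G_{\Acal}$ is a finite graph satisfying $\hat{\psi}$, so $v(G_{\Acal}) \in \spec(\hat{\psi})$; conversely $\mathrm{WF}$ forces any finite graph model $G$ of $\hat{\psi}$ to be $G_{\Acal}$ for a finite $\sigma$-structure $\Acal$, and the relativised part of $\hat{\psi}$ guarantees $\Acal \models \psi$. The step I expect to be delicate is the design of the gadgets and of $\mathrm{WF}$: the gadgets must be $\FO$-definable, must encode the \emph{direction} of $R$ using only an undirected, loopless adjacency relation, and above all must be rigid enough that $\mathrm{WF}$ rules out all ``junk'' models -- finite graphs that satisfy $\hat{\psi}$ without arising from a genuine $\sigma$-structure. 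Once one settles on sufficiently rigid gadgets (so that their automorphisms and possible overlaps are fully controlled), verifying $\mathrm{WF}$ and the translation is mechanical. As an alternative to invoking the general theorem, one could instead reduce directly from the halting problem by encoding a Turing machine computation tableau as a finite graph, but this requires first defining a grid and linear order inside a graph, which only relocates the same gadget-rigidity difficulty.
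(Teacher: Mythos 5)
The paper does not actually prove this statement: it imports it verbatim as a known consequence of Trakhtenbrot's general undecidability theorem, citing the original paper and \cite[page 303]{purple-book}. So your attempt cannot be compared to an in-paper argument; instead one has to judge it on its own terms. Your route --- reduce from finite satisfiability over a single binary relation symbol $R$ by interpreting $\sigma$-structures inside undirected graphs, relativising quantifiers to an $\FO$-definable predicate $\mathrm{El}$ and replacing $R(x,y)$ by a formula $\rho(x,y)$ read off from a direction gadget, with a well-formedness sentence to exclude junk models --- is exactly the standard derivation of the graph version found in finite model theory texts, and it does work. The reduction is computable, and the two directions of the bi-implication are, as you say, routine once the gadgets are fixed.

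That said, as written the proposal postpones precisely the part that carries all the content. You never specify the gadgets, and the correctness of the whole reduction hinges on choices you have only gestured at: (i) the tag making $\mathrm{El}(x)$ definable must not be creatable by accident out of overlapping edge gadgets (e.g.\ if element vertices are marked by pendant triangles, the direction gadgets must be triangle-free and must not close triangles when two gadgets share endpoints); (ii) the asymmetric configuration encoding the \emph{ordered} pair $(a,b)$ must remain unambiguous when both $(a,b)$ and $(b,a)$ lie in $R^{\Acal}$, and when many gadgets meet at one element vertex; (iii) the sentence $\mathrm{WF}$ must pin down every non-element vertex as belonging to exactly one gadget of the prescribed isomorphism type, which requires the gadgets to be rigid and of bounded size so that this is expressible with a fixed quantifier prefix. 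None of these is hard, but until they are written down the argument is a plan rather than a proof; the honest remark that this step is ``delicate'' is correct, and it is where a referee would ask you to fill in the details (or, as the authors of the paper do, simply to cite Trakhtenbrot and the standard reduction).
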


We next recall the scheme from \cite{colin} for generating random unipolar graphs with $n$ vertices, together with some key properties of the construction. 
\begin{itemize}
\item 
Choose the size $m$ of the central clique $C_0$ according to a  distribution proportional to $\binom{n}{m}2^{m(n-m)}B(n-m)$, where 
$B(n)$ is the $n$-th Bell number (the exact distribution is not needed  and is shown only for completeness), and choose $C_0$ as a random 
$m$-subset of $[n]$. 
\item Take a (uniformly) random set partition of the complement 
$[n] \backslash C_ 0 = C_1 \cup \cdots \cup C_k,$ and make each $C_i$  a clique.
\item Add edges arbitrarily between $C_0$ and $[n] \backslash C_0$ (i.e.~add a set of edges chosen uniformly at random
from all possible sets of edges between $C_0$ and $[n]\setminus C_0$), and no further edges.
\end{itemize}

\noindent
A random perfect graph is obtained by taking a random unipolar graph $G_n$ as generated  above  and flipping a fair coin: if the coin turns up 
heads then take $G_n$, otherwise take its complement. 
It is proved in \cite{colin} that the probability that a uniformly random perfect graph is both unipolar and co-unipolar is exponentially small, and 
that the distribution obtained by the above scheme has total variation distance $o(1)$ to 
the uniform random perfect graph, hence it can be used to prove properties of the uniform random perfect graph.  

It is shown in \cite{colin} that the cliques $C_i$ satisfy the following properties with probability tending to one as $n$ tends to infinity:



\begin{enumerate}
  \item $|C_0| = \frac{n}{2}(1+o(1))$. This follows from \cite[Theorem 2.5]{colin}.
  \item Let    $r$ be the unique root of $r e^r = n - |C_0|$. 
  For $t=1,\dots, (e-\eps) \ln n$, with $\eps>0$ arbitrary but fixed, we have
  \begin{eqnarray}\label{eq:number-of-parts} 
   \left|\{ j : |C_j| = t \}\right| = \Omega\left( r^t / t! \right). 
   \end{eqnarray}
   This follows from the results in \cite[Section 2.2.3]{colin}.
\end{enumerate}

\noindent
We note that, with high probability, we have 
\begin{equation}\label{eq:rass} 
r = \ln n - (1+o(1)) \ln\ln n. 
\end{equation}

\section{Proofs}

We start by noticing that it is easy to tell in $\FO$ whether the random perfect graph is unipolar.

\begin{corollary}\label{cor:unip}
There is a sentence $\UniP \in \FO$ such that if $P_n$ denotes the random perfect graph then, with high probability,
$P_n \models \UniP$ if and only if $P_n$ is unipolar.
\end{corollary}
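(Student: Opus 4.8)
The plan is to exhibit a single first order sentence that holds with high probability on the generated random unipolar graph $G_n$, but fails with high probability on its complement $\overline{G_n}$. Since the random perfect graph $P_n$ equals $G_n$ or $\overline{G_n}$ according to an independent fair coin, such a sentence must coincide with ``is unipolar'' with high probability, provided we also use that by \cite{colin} the complement $\overline{G_n}$ is whp not itself unipolar. The distinguishing property I would use is the existence of a \emph{simplicial} vertex, i.e.\ a vertex whose neighbourhood induces a clique:
\[ \UniP := \exists v\, \forall x, y : \big[(x\sim v)\wedge(y\sim v)\wedge\neg(x=y)\big] \to (x\sim y). \]
This is plainly a sentence of $\FO$.

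First I would verify that $G_n \models \UniP$ with high probability. By property (ii), that is \eqref{eq:number-of-parts} with $t=1$, the number of singleton side cliques is $\Omega(r)=\Omega(\ln n)\to\infty$, so whp at least one exists. Any such vertex $v$ satisfies $N(v)\subseteq C_0$, and since $C_0$ is a clique every subset of it is a clique; hence $v$ is simplicial and $G_n\models\UniP$.

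The crux is to show $\overline{G_n}\not\models\UniP$ whp, i.e.\ that whp no vertex of $\overline{G_n}$ is simplicial. A vertex $v$ is simplicial in $\overline{G_n}$ precisely when its set of $G_n$-non-neighbours forms an independent set in $G_n$. For a side vertex $v$, these non-neighbours contain every other side clique in full; since whp there are at least two side cliques of size $\ge 2$ (property (ii) with $t=2$ yields $\Omega(r^2)$ of them), some side clique of size $\ge 2$ not containing $v$ contributes an edge among the non-neighbours, so $v$ is not simplicial. For a central vertex $v\in C_0$ the non-neighbours all lie among the side vertices, and independence fails unless $v$ misses at most one vertex in each side clique; conditioning on the partition, the edges from $v$ to distinct side cliques are independent fair coins, so this event has probability $\prod_j (|C_j|+1)/2^{|C_j|}\le (3/4)^{\Omega(r^2)}=\exp(-\Omega(\ln^2 n))$, using $r\sim\ln n$ from \eqref{eq:rass}.

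Finally I would combine these: on the high-probability event that $G_n$ has a simplicial vertex, that $\overline{G_n}$ has none, and that $\overline{G_n}$ is not unipolar, the sentence $\UniP$ holds for $P_n$ iff $P_n=G_n$ iff $P_n$ is unipolar, independently of the coin. The main obstacle is the co-unipolar direction: one must make the per-vertex failure probability super-polynomially small in order to survive a union bound over the $\approx n/2$ central vertices, and this is exactly where the abundance of small side cliques guaranteed by \eqref{eq:number-of-parts} is essential.
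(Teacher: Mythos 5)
Your proof is correct, but it takes a genuinely different route from the paper's. The paper chooses a fixed graph $H$ that is unipolar but not co-unipolar and lets $\UniP$ express ``$H$ occurs as an induced subgraph'': the direction ``unipolar $\Rightarrow$ contains $H$ whp'' is outsourced to \cite[Theorem 2.3 and Lemma 4.1]{colin}, and the converse is immediate because co-unipolarity is hereditary, so a co-unipolar graph can never contain $H$. Your sentence --- ``there exists a simplicial vertex'' --- is instead verified by direct computation inside the McDiarmid--Yolov scheme: singleton side cliques (abundant by \eqref{eq:number-of-parts} with $t=1$) give simplicial vertices of $G_n$; for $\overline{G_n}$, the $\Omega(r^2)=\Omega(\ln^2 n)$ side cliques of size two deterministically rule out every side vertex as simplicial, and for a central vertex $v$ the independence of its edges to distinct side cliques justifies the product $\prod_j(|C_j|+1)/2^{|C_j|}\le(3/4)^{\Omega(\ln^2 n)}$, which survives the union bound over the roughly $n/2$ vertices of $C_0$. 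This is essentially the same counting the paper itself performs later for the lemma that characterizes membership of $C_0$ via a stable set of size three in the neighbourhood, so your argument is self-contained given properties \textbf{(i)}--\textbf{(ii)} of the scheme, at the cost of being longer than the paper's two-line reduction to the cited induced-subgraph result. Both proofs ultimately rely on the cited fact that a random perfect graph is whp not simultaneously unipolar and co-unipolar (equivalently, that $\overline{G_n}$ is whp not unipolar), which you correctly invoke; without it the equivalence with ``is unipolar'' could fail on the tails branch of the coin flip. I find no gap in your argument.
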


\begin{proof}
Let $H$ be any graph that is unipolar but not co-unipolar, and let 
$\UniP \in \FO$ formalize that ``$H$ is an induced subgraph''.
The conclusion follows from \cite[Theorem 2.3 and Lemma 4.1]{colin}, implying that $\Pee(P_n \models \UniP | P_n \text{ is unipolar } )= 1-o(1)$,
and $\Pee(P_n \models \UniP | P_n \text{ is not unipolar } )=o(1)$
(the second statement is clear since the class of co-unipolar graphs is closed under taking induced subgraphs).
\end{proof}

In what follows $G_n$ will denote the alternative scheme of random unipolar graphs of McDiarmid and Yolov.
In the light of the above, it is enough for us to show the statement of Theorem~\ref{thm:FO} for $G_n$ rather than $P_n$.
This is because if $\varphi$ is such that $\Pee( G_n \models \varphi )$ does not converge then 
$$ \Pee( P_n \models \UniP \wedge \varphi) = (1/2+o(1)) \cdot \Pee( G_n \models \varphi ) 
$$ 
also does not converge.
Similarly, it suffices to prove Theorem~\ref{thm:undecidable} for $G_n$ rather than $P_n$.
To see this, note that 
$$ \Pee( P_n \models (\UniP \wedge \varphi)\vee(\neg\UniP \wedge \overline{\varphi} ) ) = \Pee( G_n \models \varphi ) + o(1), 
$$
where $\overline{\varphi}$ is obtained from $\varphi$ by swapping $a\sim b$ for $\neg(a\sim b)$
(and hence also $\neg(a\sim b)$ is replaced with $\neg\neg(a\sim b)$ which is equivalent to $a\sim b$), so that 
$G \models \overline{\varphi}$ if and only if $\overline{G} \models \varphi$.

In the remainder of this section we will therefore only work with $G_n$.
In the proofs below, we usually think of revealing (conditioning on) the partition $C_0, \dots, C_k$ of $[n]$ so that all computations of 
probabilities etc.~will only be with respect to the random edges between $C_0$ and $\bigcup_{i>0} C_i$.
This is justified because in the construction, we add the edges between $C_0$ and its complement last, after 
the partition $C_0, \dots C_k$ has been chosen.

The following observation provides us a useful way to distinguish whether a vertex is in $C_0$ or not.

\begin{lemma}
With high probability it holds that, for each vertex $v$,

\bigskip
\centerline{$ v \in C_0$ if and only if~N(v) contains a stable set of size three.}
\end{lemma}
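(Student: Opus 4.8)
The plan is to prove the two directions of the equivalence separately, and to exploit the fact that one of them needs no randomness at all. Recall that every vertex lies either in the central clique $C_0$ or in exactly one side clique $C_1,\dots,C_k$, that there are no edges between distinct side cliques, and that the only random edges are those between $C_0$ and $\bigcup_{i\ge1}C_i$, each present independently with probability $1/2$. Throughout I condition on the partition $C_0,\dots,C_k$ and work on the high-probability event that $|C_0|=(\tfrac12+o(1))n$ and that \eqref{eq:number-of-parts} holds; the final $o(1)$ will absorb the probability that these events fail.

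For the direction ``$v\notin C_0\Rightarrow N(v)$ has no stable set of size three'', which I expect to hold \emph{deterministically}, suppose $v\in C_i$ with $i\ge1$. Since $C_i$ is a clique, all of $C_i\setminus\{v\}$ lies in $N(v)$; since there are no edges between distinct side cliques, $N(v)$ contains no vertex of any $C_j$ with $j\ge1$ and $j\ne i$; and every remaining neighbour of $v$ lies in $C_0$. Hence $N(v)=(C_i\setminus\{v\})\cup(N(v)\cap C_0)$ is covered by the two cliques $C_i\setminus\{v\}$ and $N(v)\cap C_0$. Any stable set meets each clique in at most one vertex, so the independence number of $G_n[N(v)]$ is at most $2$, and no stable set of size three can exist, whatever the random edges turned out to be.

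For the converse ``$v\in C_0\Rightarrow N(v)$ has a stable set of size three'' I use the abundance of small side cliques together with the random edges. Fix $v\in C_0$; it suffices to exhibit a neighbour of $v$ in each of three distinct side cliques, since vertices in distinct side cliques are never adjacent. By \eqref{eq:number-of-parts} with $t=2$ (legitimate for large $n$, as $2\le(e-\eps)\ln n$) and \eqref{eq:rass}, there are $N_2=\Omega((\ln n)^2)$ side cliques of size exactly $2$. For a fixed such clique, $v$ has a neighbour in it with probability $1-(1/2)^2=3/4$, and these events are independent across distinct size-$2$ cliques because they depend on disjoint sets of random edges; thus the number of size-$2$ cliques meeting $N(v)$ is distributed as $\mathrm{Bin}(N_2,3/4)$, and a crude tail bound gives
\[
\Pee[\,\text{fewer than three size-2 cliques meet $N(v)$}\,]\le\sum_{j=0}^{2}\binom{N_2}{j}(1/4)^{N_2-j}=\exp(-\Omega((\ln n)^2))=n^{-\omega(1)}.
\]
Picking one neighbour of $v$ in each of three distinct size-$2$ cliques then yields the desired stable set.

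Finally I would take a union bound over the at most $n$ vertices of $C_0$; as the per-vertex failure probability is $n^{-\omega(1)}$, the total failure probability is $o(1)$, and together with the deterministic direction this proves the lemma. The only delicate point — and the reason I work with side cliques of size $2$ rather than with singletons — is precisely beating this union bound over $\Theta(n)$ central vertices: there are only $\Omega(\ln n)$ singleton side cliques, so the corresponding failure bound $2^{-\Omega(\ln n)}=n^{-\Omega(1)}$ hides an unspecified constant that need not exceed $1$, whereas the $\Omega((\ln n)^2)$ side cliques of size $2$ force a super-polynomially small failure probability that clears $1/n$ with room to spare.
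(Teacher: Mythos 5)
Your proof is correct and follows essentially the same route as the paper's: the direction $v\notin C_0$ is handled deterministically by covering $N(v)$ with two cliques, and the converse uses the $\Omega(\ln^2 n)$ side cliques of size two, a tail bound giving per-vertex failure probability $e^{-\Omega(\ln^2 n)}$, and a union bound over the at most $n$ vertices of $C_0$. The only cosmetic difference is that you count the number of size-$2$ cliques meeting $N(v)$ as $\mathrm{Bin}(N_2,3/4)$, whereas the paper counts adjacencies to the $2N_2$ vertices of those cliques and notes that five such neighbours force three in distinct cliques; these are equivalent.
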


\begin{proof}
We first note that if $v \not\in C_0$ then $v\in C_i$ for some $i > 0$ and then its neighbourhood $N(v) \subseteq C_0 \cup C_i$ 
is covered by two cliques. So in particular $N(v)$ does not contain a stable set of size three.

For the reverse, we first observe that by construction and  estimates~\eqref{eq:number-of-parts} and~\eqref{eq:rass}, there is a constant $c>0$ such that 
with high probabibility there are at least $c \ln^2 n$ 
parts $C_j$ of size $|C_j|=2$.
Moreover, if a vertex $v \in C_0$ does not have a stable set of size three in its neighbourhood, then 
$v$ is adjacent to no more than four of the vertices in $\bigcup_{|C_j|=2} C_j$.
Thus, if we let $E$ denote the event that there is a $v \in C_0$ whose neighbourhood $N(v)$ does not contain a stable set of size three
then
$$ \begin{array}{rcl} 
\Pee( E ) & \leq & \Pee( |\{j:|C_j|=2\}| <  c \ln^2 n ) + n \cdot \Pee( \Bi( 2c\ln^2 n, 1/2 ) \leq 4 ) \\
& = & o(1) + n e^{-\Omega(\ln^2 n) } =  o(1),
\end{array} $$
where we have used the Chernoff bound. 
\end{proof}

\begin{corollary}
There exists an $\FO$-formula $\InC0$ with one free variable such that, with high probability,
$\InC0(x)$ holds for all $x\in C_0$ and $\neg\InC0(x)$ holds for all $x\not\in C_0$.
\end{corollary}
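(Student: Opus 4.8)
The plan is to read the required first-order formula directly off the property established in the preceding lemma. That lemma guarantees that, with high probability, a vertex $v$ lies in $C_0$ precisely when its neighbourhood $N(v)$ contains a stable set of size three. Since ``$N(x)$ contains a stable set of size three'' is manifestly a first-order condition on $x$, the corollary follows at once.

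Concretely, I would take $\InC0(x)$ to be the formula
$$
\InC0(x) \; := \; \exists y_1, y_2, y_3 :
\bigwedge_{i=1}^{3} (x \sim y_i)
\;\wedge\; \bigwedge_{1 \le i < j \le 3} \bigl( (y_i \neq y_j) \wedge \neg(y_i \sim y_j) \bigr),
$$
which asserts the existence of three pairwise distinct vertices $y_1, y_2, y_3$, each adjacent to $x$ and pairwise non-adjacent --- that is, a stable set of size three lying inside $N(x)$. This is a bona fide $\FO$-formula with the single free variable $x$.

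It then remains to invoke the lemma. On the high-probability event $\Ecal$ that the equivalence ``$v \in C_0$ if and only if $N(v)$ contains a stable set of size three'' holds simultaneously for every vertex $v$, we have $G_n \models \InC0(x)$ exactly for those $x \in C_0$. Hence on $\Ecal$ the formula $\InC0(x)$ holds for all $x \in C_0$ and $\neg\InC0(x)$ holds for all $x \notin C_0$, and since $\Pee(\Ecal) = 1 - o(1)$ this is precisely the assertion of the corollary.

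I do not anticipate any genuine obstacle here: all of the probabilistic content sits in the preceding lemma, and the only point to verify is that ``stable set of size three in the neighbourhood'' translates into $\FO$, which the displayed formula makes explicit. The one bookkeeping remark worth stating is that the lemma's guarantee is uniform over all vertices on a \emph{single} high-probability event, so one fixed sentence $\InC0$ works for every $x$ at once, rather than requiring a separate guarantee per vertex.
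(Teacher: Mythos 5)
Your proposal is correct and matches the paper's own proof essentially verbatim: both simply write down the $\FO$-formula asserting that $N(x)$ contains a stable set of size three and appeal to the preceding lemma for the high-probability equivalence with membership in $C_0$. No further comment is needed.
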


\begin{proof}
It is easily checked that the following formula states that the neighbourhood of $x$ contains a stable set of size three:
$$ 
\begin{array}{rcl}
 \InC0(x) & := & \exists x_1, x_2, x_3 : (x\sim x_1) \wedge (x\sim x_2) \wedge (x\sim x_3) \\
 & & \wedge \neg(x_1=x_2) \wedge \neg(x_1=x_3) \wedge \neg(x_2=x_3) \\ 
 & & \wedge \neg(x_1\sim x_2) \wedge \neg(x_1\sim x_3) \wedge \neg(x_2\sim x_3).
\end{array} 
$$
\end{proof}

\noindent
For $S \subseteq [n]$, we write $N(S) := \bigcap_{v\in S} N(v)$ for the set of common neighbours of $S$ 
in our random graph $G_n$.

\begin{corollary}
There exists an $\FO$-formula $\CN$ with two free variables such that, 
with high probability, $\CN(x,y)$ holds if and only if~$x\in C_0, y \in C_i$ for some $i>0$, and $x \in N(C_i)$. 
\end{corollary}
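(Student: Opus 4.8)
The plan is to write the required formula $\CN(x,y)$ explicitly in terms of the formula $\InC0$ supplied by the previous corollary, exploiting the fact that among vertices outside $C_0$, adjacency exactly encodes membership in a common side clique. Throughout I would condition on the high-probability event that $\InC0$ correctly identifies $C_0$; it then suffices to verify the desired equivalence deterministically on the revealed partition $C_0, C_1, \dots, C_k$, and the ``with high probability'' qualifier is inherited for free.

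The key structural observation is that the side clique $C_i$ containing a given vertex $y \notin C_0$ is first-order definable relative to $y$. Indeed, each $C_i$ with $i > 0$ is a clique and there are no edges between distinct side cliques, so for vertices $y, z \notin C_0$ we have $y \sim z$ precisely when $y$ and $z$ lie in the same side clique (and $y \neq z$). Consequently, the set of $z$ satisfying the condition $\neg\InC0(z) \wedge (z = y \vee z \sim y)$ is exactly $C_i$: a vertex of a different side clique fails because it is non-adjacent to $y$, a vertex of $C_0$ is removed by the conjunct $\neg\InC0(z)$, and $y$ itself is captured by the disjunct $z = y$.

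With this translation in hand I would set
$$
\CN(x,y) := \InC0(x) \wedge \neg\InC0(y) \wedge \forall z : \Big[ \big( \neg\InC0(z) \wedge (z = y \vee z \sim y) \big) \Rightarrow (x \sim z) \Big].
$$
Assuming $\InC0$ is correct, the first two conjuncts enforce $x \in C_0$ and $y \in C_i$ for some $i > 0$. The universal clause then asserts that $x$ is adjacent to every member of $C_i$, that is $x \in N(C_i)$; instantiating $z = y$ in particular forces $x \sim y$. Conversely, if $x \in C_0$, $y \in C_i$ and $x \in N(C_i)$, then every $z$ satisfying the hypothesis of the implication lies in $C_i$ and is therefore adjacent to $x$, so the sentence holds. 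This gives the claimed equivalence.

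The main (and essentially only) obstacle is that first-order logic cannot quantify over the \emph{set} $C_i$ directly; the whole argument hinges on replacing the meta-statement ``$z$ ranges over $C_i$'' by the purely local adjacency test $z = y \vee z \sim y$ restricted to non-central vertices. This substitution is legitimate exactly because of the clique-plus-no-cross-edges structure of the side cliques, so once it is justified the verification is routine and the result follows.
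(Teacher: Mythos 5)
Your proposal is correct and takes essentially the same approach as the paper: the paper's formula is $\InC0(x) \wedge \neg\InC0(y) \wedge (x\sim y) \wedge (\forall z : (\neg\InC0(z)\wedge(z\sim y))\Rightarrow(x\sim z))$, which differs from yours only in that the case $z=y$ is handled by the explicit conjunct $x\sim y$ rather than by the disjunct $z=y$ inside the universal clause. The two formulas are equivalent under the intended interpretation of $\InC0$, and your justification via the clique-plus-no-cross-edges structure of the side cliques is exactly the intended one.
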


\begin{proof}
It is easily checked that the following definition works out (assuming that $\InC0$ expresses membership of $C_0$ as intended):
$$ \begin{array}{rcl} \CN(x,y) & := & \InC0(x) \wedge \neg \InC0(y) \wedge (x\sim y) \\ 
& &  \wedge (\forall z : (\neg \InC0(z) \wedge (z\sim y))\Rightarrow (x\sim z)).    
   \end{array} $$
\end{proof}

For $S, T \subseteq [n]$ we let $H(S,T)$ denote the graph with vertex set 
$S$ and an edge between $a, b \in S$ if and only if~there is a $v \in T$ that is adjacent to 
both $a$ and $b$.

\begin{corollary}
The exists an $\FO$-formula $\Hedge$ with three free variables such that, with high probability,
$\Hedge(x,y,z)$  holds if and only if $x,y \in C_0$, $x \neq y$,   $z \in C_i$ for some $i>0$, and $xy$ is an edge of $H(C_0,C_i)$.
\end{corollary}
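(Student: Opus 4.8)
The plan is to exhibit the formula explicitly and then verify its semantics on the single high-probability event, supplied by the previous corollary, on which $\InC0$ correctly decides membership of $C_0$ for \emph{every} vertex simultaneously. The structural facts I will exploit are that, by construction, each side clique $C_i$ (with $i>0$) is a clique while there are no edges between distinct side cliques. Consequently, for two vertices $v,z\notin C_0$, the vertices $v$ and $z$ lie in the same side clique if and only if $v=z$ or $v\sim z$; this is precisely how I will express ``$v$ belongs to the side clique $C_i$ containing $z$'' in first order logic. Note that the earlier formula $\CN$ is not needed here, since we can search for a witness directly.

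With this in hand I would take
\[
\begin{array}{rcl}
\Hedge(x,y,z) & := & \InC0(x) \wedge \InC0(y) \wedge \neg(x=y) \wedge \neg\InC0(z) \\
& & \wedge\, \exists v : \Big( \neg\InC0(v) \wedge \big((v=z) \vee (v\sim z)\big) \wedge (v\sim x) \wedge (v\sim y) \Big).
\end{array}
\]
Recalling that $xy$ is an edge of $H(C_0,C_i)$ exactly when some $v\in C_i$ is adjacent to both $x$ and $y$, the existential quantifier in the last line searches precisely for such a witness $v$ inside the side clique of $z$.

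To finish, I would condition on the high-probability event of the previous corollary on which $\InC0(x)$ holds iff $x\in C_0$ for all $x$; on this event the verification is deterministic. For the forward direction, if $\Hedge(x,y,z)$ holds then $x,y\in C_0$, $x\neq y$, and $z\in C_i$ for some $i>0$, and the witness $v$ satisfies $v\notin C_0$ together with $v=z$ or $v\sim z$, which by the structural fact forces $v\in C_i$; since $v\sim x$ and $v\sim y$, the pair $xy$ is an edge of $H(C_0,C_i)$. Conversely, if $x,y\in C_0$ with $x\neq y$, $z\in C_i$, and $xy$ is an edge of $H(C_0,C_i)$, then a common neighbour $v\in C_i$ of $x$ and $y$ serves as the existential witness, since $v\notin C_0$ and either $v=z$ or $v\sim z$ (both lying in the clique $C_i$).

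The only point requiring care is that $\InC0$ is invoked under the quantifier over $v$, so one might naively worry about accumulating error across the $n$ choices of $v$. This is not an issue, because the good event I condition on already guarantees correctness of $\InC0$ at every vertex at once, giving a single $o(1)$ error term with no union bound over the quantifier. I expect this bookkeeping, rather than any genuine combinatorial difficulty, to be the main thing to get right.
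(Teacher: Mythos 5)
Your formula is, up to a trivial rearrangement of the conjuncts inside the existential quantifier, exactly the one the paper uses, and your verification (witness forced into the side clique of $z$ because distinct side cliques span no edges, plus conditioning once on the global correctness event for $\InC0$) is the same argument the paper leaves as ``easily checked.'' Correct, and essentially identical to the paper's proof.
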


\begin{proof}
It is easily checked that the following formula will do the trick (again assuming $\InC0$ expresses the right thing):
$$ \begin{array}{rcl} 
\Hedge(x,y,z) & := & 
\InC0(x) \wedge \InC0(y) \wedge \neg \InC0(z) \wedge \neg (x=y) \\
& & \wedge 
(\exists z_1 : ((z_1=z)\vee (\neg \InC0(z_1)\wedge (z_1\sim z)) \wedge
 (x\sim z_1)\wedge (y\sim z_1)).
\end{array} $$

(To aid the reader, let us point out that the only purpose of the variable $z$ is to represent $C_i$.)
\end{proof}

\begin{corollary}\label{cor:transform}
For every $\varphi \in \FO$ there exists an $\FO$-formula $\Phi(x,y)$ with two free variables such that, with high probability,
$\Phi(x,y)$ holds if and only if~$x \in C_i, y\in C_{j}$ for some $i,j>0$, and $H(N(C_i),C_j) \models \varphi$.
\end{corollary}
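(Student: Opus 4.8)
The plan is to build $\Phi(x,y)$ by composing the previously constructed formulas $\InC0$, $\CN$, and especially $\Hedge$, together with a \emph{relativization} of $\varphi$ to the vertices of the auxiliary graph $H(N(C_i),C_j)$. The key observation is that all the ingredients needed to simulate $H(N(C_i),C_j)$ inside $G_n$ are already first-order definable, so the task reduces to a syntactic translation of $\varphi$ into a formula whose quantifiers range only over the appropriate vertex set and whose edge relation is the one induced by $H(N(C_i),C_j)$.

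First I would use the free variables $x,y$ as ``tags'' representing the side cliques $C_i$ and $C_j$, exactly as $z$ is used in $\Hedge$. The vertex set of the graph we wish to talk about is $N(C_i) \subseteq C_0$; using $\CN$ we can express ``$w \in N(C_i)$'' as $\CN(w,x)$ (recalling $x \in C_i$ plays the role of a representative of $C_i$). The edge relation of $H(N(C_i),C_j)$ on this vertex set is precisely what $\Hedge(\cdot,\cdot,y)$ computes: two vertices $a,b \in N(C_i) \subseteq C_0$ are adjacent in $H(N(C_i),C_j)$ if and only if some vertex of $C_j$ is adjacent to both, which is $\Hedge(a,b,y)$. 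So I have a definable universe and a definable edge relation.

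The main step is then the relativization itself, carried out by induction on the structure of $\varphi$. I would define a translation $\psi \mapsto \psi^*$ on subformulas so that each quantifier $\exists u$ becomes $\exists u : \CN(u,x) \wedge (\dots)$ and each $\forall u$ becomes $\forall u : \CN(u,x) \Rightarrow (\dots)$, thereby restricting the range of every variable to $N(C_i)$; each atomic edge formula $u \sim v$ is replaced by $\Hedge(u,v,y)$; and equality $u=v$ is kept as is. Then I would set
$$
\Phi(x,y) := \neg\InC0(x) \wedge \neg\InC0(y) \wedge \varphi^*,
$$
where the two leading conjuncts guarantee $x \in C_i$, $y \in C_j$ for some $i,j>0$ (so that the tags are genuine side-clique representatives). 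Finally I would argue, conditioning on the high-probability event that $\InC0$, $\CN$, and $\Hedge$ all behave as intended, that $G_n \models \Phi(x,y)$ holds exactly when $x,y$ lie in side cliques $C_i,C_j$ and the relativized formula $\varphi^*$ correctly evaluates $\varphi$ on $H(N(C_i),C_j)$.

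The step I expect to require the most care is the inductive correctness of the relativization: one must check that under the conditioning, for every assignment of the bound variables to vertices of $N(C_i)$, the translated atomic formulas and the guarded quantifiers faithfully reproduce the truth value of the corresponding subformula of $\varphi$ in the structure $H(N(C_i),C_j)$. This is routine but must be stated cleanly, since $\Hedge$ and $\CN$ only define the right relations on the high-probability event, and one has to verify that restricting quantifiers via $\CN(\cdot,x)$ really captures the vertex set $N(C_i)$ and not some larger or smaller set. Once the invariant ``for all subformulas $\psi$ and all assignments into $N(C_i)$, $G_n \models \psi^*$ iff $H(N(C_i),C_j) \models \psi$'' is established by the obvious induction, the statement follows immediately.
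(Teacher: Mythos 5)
Your proposal is correct and matches the paper's proof essentially verbatim: the same relativization of quantifiers to $\CN(\cdot,x)$, the same substitution of $\Hedge(\cdot,\cdot,y)$ for the edge relation, and the same final conjunction with $\neg\InC0(x)\wedge\neg\InC0(y)$. The inductive correctness check you flag is exactly what the paper leaves to the reader.
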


\begin{proof}
The formula $\Phi$ can be read off from $\varphi$ in a straightforward way as follows. 
In $\varphi$, we replace every occurrence of $a \sim b$ by $\Hedge(a,b,y)$ and 
we ``relativize the quantifiers to $\CN(.,x)$''. That is:
\begin{itemize} 
\item $\exists z : \psi$ is replaced by $\exists z : \CN(z,x) \wedge \psi$, and; 
\item $\forall z : \psi$ is replaced by $\forall z : \CN(z,x) \Rightarrow \psi$.
\end{itemize}
Finally we take the conjunction of the end result with $\neg\InC0(x) \wedge \neg\InC0(y)$.
The reader can easily verify that the formula we obtain is as required (assuming $\InC0$ and $\CN$ take on their
intended meanings).
\end{proof}

Let us write 
$$ \ell := \left\lceil\ln\ln\ln n\right\rceil. 
$$
\begin{lemma}\label{lem:rightsize}
With high probability, for every $0 \leq \ell' \leq \ell$, there exist $n^{\Omega(1)}$ indices $i>0$ with $|N(C_i)| = \ell'$.
\end{lemma}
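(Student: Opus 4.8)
The plan is to condition on the partition $C_0,C_1,\dots,C_k$, which fixes $N:=|C_0|$ and all the clique sizes, and then exploit that after this conditioning the sets $N(C_i)$ are governed by \emph{independent} binomial variables. First I would record the structural observation that $N(C_i)\subseteq C_0$ for every side clique $C_i$ with $i>0$: a vertex lying in a different side clique $C_j$ has no neighbour in $C_i$, and a vertex of $C_i$ is not its own neighbour, so only vertices of $C_0$ can be common neighbours of all of $C_i$. Each $u\in C_0$ lies in $N(C_i)$ exactly when the $|C_i|$ independent fair coins governing the edges from $u$ to $C_i$ all turn up as edges, which has probability $2^{-|C_i|}$. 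Hence, conditionally on the partition, $|N(C_i)|\sim\Bi(N,2^{-|C_i|})$, and since the edge-sets joining $C_0$ to distinct side cliques are pairwise disjoint, the family $\{|N(C_i)|\}_{i>0}$ is mutually independent.

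Next I would fix a single target clique size. Set $t^\ast:=\lceil\log_2 N\rceil$, so that $\lambda:=N2^{-t^\ast}\in(\tfrac12,1]$. On the high-probability event that $N=\tfrac{n}{2}(1+o(1))$ one has $t^\ast=(1+o(1))\tfrac{\ln n}{\ln 2}$, which is safely below $(e-\eps)\ln n$; therefore estimate \eqref{eq:number-of-parts} applies with $t=t^\ast$, and, using Stirling's formula together with \eqref{eq:rass}, the number $M:=|\{j:|C_j|=t^\ast\}|$ of side cliques of size $t^\ast$ satisfies $M=\Omega(r^{t^\ast}/t^\ast!)=n^{(1+\ln\ln 2)/\ln 2+o(1)}=n^{\Omega(1)}$, a fixed positive power of $n$ (here $1+\ln\ln 2\approx 0.63>0$).

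For each fixed $\ell'\in\{0,\dots,\ell\}$ I would then bound from below the point probability $p_{\ell'}:=\Pee(\Bi(N,2^{-t^\ast})=\ell')$. A direct expansion of the binomial coefficient, legitimate because $\ell'\le\ell=o(\sqrt N)$, gives $p_{\ell'}\ge(1-o(1))\,e^{-(1+o(1))\lambda}\,\lambda^{\ell'}/\ell'!\ge c\,\lambda^{\ell'}/\ell'!$ for a constant $c>0$. Since $\lambda\in(\tfrac12,1]$ and $\ell'\le\ell=\lceil\ln\ln\ln n\rceil$, both $\lambda^{\ell'}\ge 2^{-\ell}$ and $1/\ell'!\ge 1/\ell!$ are of order $n^{-o(1)}$, their logarithms being $O(\ell\ln\ell)=O(\ln\ln\ln n\cdot\ln\ln\ln\ln n)=o(\ln n)$. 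Thus $p_{\ell'}=n^{-o(1)}$ uniformly over $0\le\ell'\le\ell$. Conditionally on the partition, the count $X_{\ell'}:=|\{i:|C_i|=t^\ast,\ |N(C_i)|=\ell'\}|$ is a sum of $M$ independent $\mathrm{Bernoulli}(p_{\ell'})$ indicators with mean $Mp_{\ell'}=n^{\Omega(1)}\cdot n^{-o(1)}=n^{\Omega(1)}$; a Chernoff bound gives $\Pee(X_{\ell'}<\tfrac12 Mp_{\ell'})\le e^{-n^{\Omega(1)}}$. A union bound over the $\ell+1=O(\ln\ln\ln n)$ values of $\ell'$, together with the $o(1)$ probability that the partition-level estimates fail, then shows that with high probability $X_{\ell'}=n^{\Omega(1)}$ for all $0\le\ell'\le\ell$ at once, with a single positive exponent valid for every $\ell'$, which is exactly the claim.

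I expect the main obstacle to be the uniform lower bound on the point probabilities $p_{\ell'}$ as $\ell'$ is allowed to grow (slowly) with $n$: pointwise Poisson convergence does not suffice, so one must use the explicit binomial estimate above and verify that the single size $t^\ast$ simultaneously lands in the regime where \eqref{eq:number-of-parts} guarantees $n^{\Omega(1)}$ cliques \emph{and} keeps $\lambda$ of constant order, so that all of $p_0,\dots,p_\ell$ remain at level $n^{-o(1)}$.
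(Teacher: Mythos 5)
Your proof is correct and follows essentially the same route as the paper's: condition on the partition, use estimate \eqref{eq:number-of-parts} to obtain $n^{\Omega(1)}$ side cliques of size about $\log_2 n$, lower-bound the conditional binomial point probability $\Pee(|N(C_j)|=\ell')$ by $n^{-o(1)}$, and conclude via independence and a concentration bound. The only difference is that the paper picks a separate clique size $t=t(\ell')$ for each $\ell'$ so that $\Ee|N(C_j)|=\Theta(\ell')$ and the point probability is $e^{-O(\ell')}$, whereas you fix a single $t^*$ with mean $\Theta(1)$ and absorb the factor $\lambda^{\ell'}/\ell'!$ into $n^{-o(1)}$ using $\ell'\leq\ell=\lceil\ln\ln\ln n\rceil$ --- an equally valid, slightly tidier variant that also makes the union bound over $\ell'$ explicit.
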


\begin{proof}
Let $t \in \eN$ be such that $(n/2) \cdot (1/2)^{t-1} > \ell' \geq (n/2) \cdot (1/2)^t$.
So we have $(n/2) \cdot (1/2)^t \in (\ell'/2,\ell']$ and 
\begin{equation}\label{eq:tass} 
t = (1+o(1)) \log_2 n = (1+o(1)) \ln n / \ln 2. 
\end{equation}
%
Let us write $J := \{ j : |C_j| = t \}$.
In the McDiarmid-Yolov construction, with high probability, we have 
$$ 
\renewcommand*{\arraystretch}{1.2}\begin{array}{rcl} 
|J| 
& = & \Omega( r^t / t! ) \\
& = & \Omega\left( \exp\left[  t \ln r - t\ln t + t + O(\ln t) \right] \right)\\
& = & \Omega\left( \exp\left[  t \ln(r/t) + t + O(\ln t) \right] \right)\\
& = & \Omega\left( \exp\left[  t \cdot \left( \ln\ln 2 + 1 + o(1) \right) + O(\ln t) \right] \right)\\
& = & \exp\left[ \Omega( \ln n ) \right] \\
& = & n^{\Omega(1)},
\end{array} 
$$
where we have used Stirling's approximation in the second line, and we have used that $r/t = \ln 2 + o(1)$ by~\eqref{eq:rass} and~\eqref{eq:tass}
in the fourth line, and that $\ln\ln 2 + 1 \approx 0.633 > 0$ in the fifth line.

We have that (conditional on the partition $C_0, \dots, C_k$):
$$ |N(C_j)| \isd \Bi( |C_0|, (1/2)^t ) \quad (\forall j \in J). $$
Hence 
$$ \Ee |N(C_j)| = |C_0| (1/2)^t = (1+o(1)) (n/2) (1/2)^t = \Theta(\ell').
$$

\noindent
(The expectation again being conditional on the partition $C_0, \dots, C_k$).
Therefore, for each $j \in J$ (conditional on the partition $C_0, \dots, C_k$) we have
$$ 
\renewcommand*{\arraystretch}{1.5}
\begin{array}{rcl} 
   \Pee( |N(C_j)| = \ell' ) 
   & = & \displaystyle 
   {|C_0|\choose \ell'} (1/2)^{t\ell'} (1-(1/2)^t)^{|C_0|-\ell'} \\
   & \geq & \displaystyle 
   \left(|C_0|/\ell'\right)^{\ell'} (1/2)^{t\ell'} \left(1-(1/2)^t\right)^{|C_0|-\ell'} \\
   & = & \displaystyle 
   \left(|C_0| (1/2)^t/\ell'\right)^{\ell'} (1-(1/2)^t)^{|C_0|-\ell'} \\
   & = & \displaystyle 
   \left(\Theta(\ell')/\ell'\right)^{\ell'} (1-(1/2)^t)^{|C_0|-\ell'} \\
   & = & \displaystyle
   \Theta(1)^{\ell'}  (1-(1/2)^t)^{|C_0|-\ell'} \\
   & = & \displaystyle 
   \exp[ \pm O(\ell') + (|C_0|-\ell') \ln(1-(1/2)^t) ] \\
   & \geq & \displaystyle 
   \exp[ - O( \ell' ) - O( |C_0|(1/2)^t ) ] \\
   & \geq & \displaystyle 
   \exp[ - O( \ell' ) ]. 
   \end{array} 
   $$
Here we have used the standard bound ${n\choose k} \geq \left(n/k\right)^k$ in the second line, and the estimate 
$\ln(1-x) = -\Theta(x)$ as $x\downarrow 0$ in the seventh line.
Let us denote by $I := \{ j \in J : |N(C_j)| = \ell' \}$ the number of $j \in J$ for which $C_j$ has exactly $\ell'$
common neighbours. The previous considerations show that 
$$ \Ee |I| = |J| \cdot \exp[ - O(\ell' ) ] = \exp[ \Omega( \ln n ) - O( \ell' ) ] = \exp[ \Omega( \ln n ) ]
= n^{\Omega(1)}, $$
since $\ell' \leq \ell \ll \ln n$.
Let us now point out that the random variables $\{|N(C_j)| : j \in J\}$ are in fact independent (since they depend on disjoint
sets of edges -- of course this is again all conditional on the partition $C_0, \dots, C_k$).
So in particular $|I|$ is a binomial random variable, whose mean tends to infinity. Hence (for instance, by Chebyschev's inequality) 
$\Pee( |I| < \Ee |I| / 2) = o(1)$.
\end{proof}

\begin{lemma}\label{lem:realise}
With high probability, the following holds.
For every $i,j > 0$ such that $|N(C_i) \cup N(C_{j})| \leq  2\ell$, and 
for every (labelled) graph $G$ with $V(G) = N(C_i) \cup N(C_j)$, there is a $k>0$ such that
$H(N(C_i) \cup N(C_j),C_k) = G$.
\end{lemma}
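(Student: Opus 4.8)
The plan is to fix a pair $i,j>0$ with $s := |N(C_i)\cup N(C_j)| \le 2\ell$, together with a target graph $G$ on the vertex set $S := N(C_i)\cup N(C_j)$, to show that among the many side cliques of a carefully chosen size at least one realises $G$, and finally to take a union bound over all pairs and all targets. Throughout I condition on the partition $C_0,\dots,C_k$, assuming it lies in the high-probability event on which \eqref{eq:number-of-parts} and \eqref{eq:rass} hold. Note first that $S\subseteq C_0$: a vertex of a side clique $C_m$ has all its neighbours inside $C_0\cup C_m$, so only vertices of $C_0$ can be common neighbours of every vertex of $C_i$. Since the edges between $C_0$ and a side clique $C_k$ with $k\ne i,j$ are independent fair coins that play no role in defining $S$, I may reveal the edges between $C_i\cup C_j$ and $C_0$ (which determine $S$ and whether $s\le 2\ell$) and then treat, for each remaining side clique $C_k$, the traces $A_v:=N(v)\cap S$ ($v\in C_k$) as independent uniform subsets of $S$.

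The heart of the matter is a clean lower bound on the chance that one side clique realises $G$. Write $e:=|E(G)|\le\binom{2\ell}{2}$ and list the edges of $G$ as $f_1,\dots,f_e$. If $C_k$ has $t\ge e$ vertices $v_1,\dots,v_t$, then the event $A_{v_1}=f_1,\dots,A_{v_e}=f_e$ and $A_{v_{e+1}}=\dots=A_{v_t}=\emptyset$ makes every trace a clique of $G$ (so $H(S,C_k)$ contains no edge outside $G$) while covering every edge of $G$; it therefore forces $H(S,C_k)=G$. As each trace equals a prescribed subset with probability $2^{-s}$, this yields, for every side clique $C_k$ with $|C_k|=t$ and $k\ne i,j$,
\[ \Pee\bigl(H(S,C_k)=G\bigr)\;\ge\;2^{-st}=:p. \]

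Now I calibrate the clique size by setting $t:=\lceil r/2^{2\ell}\rceil$. Since $2^{2\ell}=(\ln\ln n)^{O(1)}$ and $r=(1+o(1))\ln n$ by \eqref{eq:rass}, one has $e\ll t\ll\ln n$, so $t$ lies in the admissible range of \eqref{eq:number-of-parts} and $t\ge e$. By \eqref{eq:number-of-parts} there are $M\ge c\,r^t/t!$ side cliques of size $t$ for a fixed $c>0$; discarding the at most two with $k\in\{i,j\}$ and using that the remaining ones have pairwise disjoint edge sets to $C_0$, the corresponding realisation events are independent, so
\[ \Pee\bigl(\text{no size-}t\text{ clique realises }G\bigr)\;\le\;(1-p)^{M-2}\;\le\;\exp\!\bigl[-\tfrac{c}{2}\,(r/2^s)^t/t!\bigr]. \]
A Stirling estimate gives $\ln\bigl((r/2^s)^t/t!\bigr)=t\ln\!\bigl(e\,r/(2^st)\bigr)$, which for the worst case $s=2\ell$ and our choice $t\approx r/2^{2\ell}$ equals $(1+o(1))\,r/2^{2\ell}=\ln n/(\ln\ln n)^{O(1)}$; for smaller $s$ the same $t$ sits below the maximiser $r/2^s$, so the quantity only grows. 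Hence the failure probability above is at most $\exp[-e^{\,\ln n/(\ln\ln n)^{O(1)}}]$, which is smaller than any fixed power of $n$.

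Finally I union bound. There are at most $n^2$ pairs $(i,j)$, and for each admissible pair at most $2^{\binom{|S|}{2}}\le 2^{\binom{2\ell}{2}}=n^{o(1)}$ labelled targets on $S$; taking the expectation over the revealed edges of $C_i\cup C_j$ and summing leaves a bound of order $n^{2+o(1)}\exp[-e^{\ln n/(\ln\ln n)^{O(1)}}]=o(1)$, and discarding the conditioning on the good partition (which costs only $o(1)$) finishes the proof. I expect the main obstacle to be exactly this calibration: the size $t$ must be large enough that \eqref{eq:number-of-parts} supplies super-polynomially many candidate cliques, yet small enough that the realisation probability $2^{-st}$ is not crushed. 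The tension is resolved precisely because the hypothesis $s\le 2\ell$ forces $2^s$ to be only poly-$\log\log n$, so the optimal $t\approx r/2^{2\ell}$ remains comfortably below $(e-\eps)\ln n$ while keeping $(r/2^s)^t/t!$ quasi-polynomially large.
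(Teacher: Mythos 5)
Your proof is correct and follows essentially the same route as the paper's: fix a candidate clique size $t$, use \eqref{eq:number-of-parts} to produce super-polynomially many side cliques of that size, lower-bound the probability that a given one realises $G$ by $2^{-st}$, exploit independence across cliques, and union bound over the at most $n^2$ pairs $(i,j)$ and $2^{\binom{2\ell}{2}}$ target graphs. The only substantive difference is the calibration: the paper simply takes $t=\lceil\sqrt{\ln n}\,\rceil$, for which $|K|=\exp[\Omega(t\ln\ln n)]$ already dwarfs $2^{2\ell t}=\exp[O(t\ell)]$, whereas your near-optimal $t=\lceil r/2^{2\ell}\rceil$ works equally well but is not needed; your explicit two-stage edge revelation and exclusion of $k\in\{i,j\}$ are careful touches the paper leaves implicit.
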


\paragraph{Remark.} We emphasize that when we say $G=H$ we do not just speak about isomorphism, but we really mean that 
$V(G)=V(H)$ and $E(G)=E(H)$.

\begin{proof}
We set $t := \left\lceil \sqrt{\ln n}\right\rceil$, and let $K := \{ k : |C_k|= t \}$.
As before, with high probability, we have 
$$ 
\renewcommand*{\arraystretch}{1.2}
\begin{array}{rcl} 
|K| & = & \Omega( r^t / t! ) \\
& = & \Omega\left( \exp\left[ t \ln r - t\ln t + t + O(\ln t) \right] \right) \\
& = & \Omega\left( \exp\left[ t \ln(r/t) + t + O(\ln t) \right] \right) \\
& = & \exp\left[ \Omega( t \ln\ln n ) \right],
\end{array} $$

\noindent
where we have again used Stirling for the second line, and that $r/t = (1+o(1)) \sqrt{\ln n}$ by~\eqref{eq:rass} for the last line.

For the moment, let us fix some set $S \subseteq C_0$ of cardinality $\leq 2\ell$, a $k \in K$ and a ``target'' graph $G$ with $V(G) = S$.
Let $E_{S,G, k}$ denote the event that $H(S,C_k) = G$.
Since $|C_k|= t > {|S|\choose 2}$ there is at least one way to choose the edges between $S$ and $C_k$ that would result in 
desired situation where $H(S, C_k) = G$. In other words,
$$ \Pee( E_{S,G,k} ) \geq (1/2)^{t|S|} \geq (1/2)^{2t\ell}. $$
Writing $E_{S,G} := \bigcup_{k\in K} E_{S,G,k}$ and denoting by $A^c$ the complement of $A$, we find that
$$ 
\renewcommand*{\arraystretch}{1.2}
\begin{array}{rcl}
 \Pee( E_{S,G}^c ) 
 & \leq & 
 \left(1-(1/2)^{2\ell\cdot t}\right)^{|K|} \\
 & \leq & 
 \exp\left[ - |K| \cdot (1/2)^{2\ell\cdot t} \right] \\
 & = & 
 \exp\left[ - \exp\left[ \Omega( t \ln\ln n ) - O( t \ell ) \right] \right] \\
 & = & \exp\left[ - \exp\left[ \Omega( t \ln\ln n ) \right] \right]. 
  \end{array} 
  $$
Let $E$ denote the event that for every $S \subseteq C_0$ of the form $S = N(C_i) \cup N(C_j)$ with $|S| \leq 2\ell$ and 
for every target graph $G$ with $V(G)=S$ there is some $k>0$ such that $H(S,C_k) = G$.

We remark that there are at most $n^2$ choices of the set $S$ (as it must be a union $N(C_i) \cup N(C_j)$) and
at most $2^{{2\ell\choose 2}}$ choices of the target graph $G$.
Hence we have that 
$$ 
\renewcommand*{\arraystretch}{1.2}
\begin{array}{rcl}
   \Pee( E^c ) 
   & \leq & 
   n^2 \cdot 2^{{2\ell\choose 2}} \cdot \exp\left[ - \exp\left[ \Omega( t \ln\ln n ) \right] \right] \\
   & = & 
   \exp\left[ O( \ln n ) + O( \ell^2 ) - \exp\left[ \Omega( t \ln\ln n ) \right] \right] \\
   & = & 
   o(1),
   \end{array} $$
Since $\exp[ \Omega( t \ln\ln n) ] \gg \ln n$ ($\gg \ell^2$).
\end{proof}

We now have all the tools to prove Theorem~\ref{thm:undecidable}.

\begin{proofof}{Theorem~\ref{thm:undecidable}}
Let $\varphi \in \FO$ be an arbitrary sentence and let $\Phi(.,.)$ be as provided by Corollary~\ref{cor:transform}.
Consider the sentence
$$ \psi := \exists x,y : \Phi(x,y). 
$$
Up to error probability $o(1)$, we have that $\psi$ holds if and only if~there exist $i,j>0$ such that 
$H(N(C_i), C_j) \models \varphi$.

Thus, if $\spec(\varphi) = \emptyset$, that is if there is no finite graph that satisfies $\varphi$, then 
clearly $\Pee( G_n \models \psi ) = o(1)$.

On the other hand, if $\spec(\varphi) \neq \emptyset$, that is, if there is some finite graph $H$ such that 
$H \models \varphi$, then by Lemmas~\ref{lem:rightsize} and~\ref{lem:realise} for $n$ sufficiently large (namely $n$ such that $\ell \geq v(H)$)  
we will find indices $i,j>0$ such that $H(N(C_i), C_j) = H$.
This shows that, if $\spec(\varphi) \neq \emptyset$, then 
$\Pee( G_n \models \psi ) = 1 - o(1)$.

We have just shown that the constructed sentence $\psi$ has limiting probability zero if $\spec(\varphi)=\emptyset$ and limiting probability
one otherwise.
Thus any algorithm that can decide whether $\lim_{n\to\infty} \Pee( G_n \models \psi )$ equals zero or equals one
will allow us to decide whether or not $\varphi$ has a finite model. 
Therefore, there can be no such algorithm as this would contradict  Trakhtenbrot's theorem.
\end{proofof}

Before we can prove Theorem~\ref{thm:FO} we need one more ingredient.

\begin{corollary}\label{cor:bigger}
There exists an $\FO$-formula $\Bigger$ with two free variables such that, with high probability:
\begin{itemize}
 \item If $\Bigger(x,y)$ holds then there exist $i,j>0$ such that $x \in C_i, y \in C_j$ and $|N(C_i)|>|N(C_j)|$;
 \item If $x \in C_i, y \in C_j$ for some $i,j>0$ with $|N(C_j)|<|N(C_i)| \leq \ell$ then $\Bigger(x,y)$ holds.
\end{itemize}
\end{corollary}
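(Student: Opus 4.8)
The plan is to build $\Bigger$ so that it expresses, in $\FO$, that the common neighbourhood $N(C_i)$ of the side clique containing $x$ is strictly larger than the common neighbourhood $N(C_j)$ of the side clique containing $y$. The formulas $\InC0$, $\CN$ and $\Hedge$ already constructed give us, with high probability, everything we need: $\CN(\cdot,x)$ is the indicator of $N(C_i)$, $\CN(\cdot,y)$ is the indicator of $N(C_j)$, and for a vertex $z\notin C_0$ lying in a side clique $C_k$ the relation $\Hedge(\cdot,\cdot,z)$ reads off the edges of the auxiliary graph $H(C_0,C_k)$, hence of $H(S,C_k)$ on any $S\subseteq C_0$. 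The one genuinely useful idea is to reduce the comparison to one between \emph{disjoint} sets: writing $A:=N(C_j)$ and $B:=N(C_i)$, the identity $|A|-|B|=|A\setminus B|-|B\setminus A|$ shows that $|N(C_j)|<|N(C_i)|$ holds if and only if $|A\setminus B|<|B\setminus A|$. The sets $A':=A\setminus B$ and $B':=B\setminus A$ are disjoint and $\FO$-definable from $x,y$, for instance via $A'(v):=\CN(v,y)\wedge\neg\CN(v,x)$ and symmetrically for $B'(v)$.

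Next I would express $|A'|<|B'|$ as the existence of an injection $A'\hookrightarrow B'$ that is not onto, encoded as a bipartite matching between $A'$ and $B'$ inside a graph $G=H(S,C_k)$ on $S:=N(C_i)\cup N(C_j)$. Because $A'$ and $B'$ are disjoint, every matching edge has exactly one endpoint in each part and no directional ambiguity arises. Concretely $\Bigger(x,y)$ asserts $\neg\InC0(x)\wedge\neg\InC0(y)$ together with the existence of a $z\notin C_0$ such that, reading edges through $\Hedge(\cdot,\cdot,z)$: (i) every $v\in A'$ has exactly one neighbour in $B'$; (ii) every $w\in B'$ has at most one neighbour in $A'$; and (iii) some $w\in B'$ has no neighbour in $A'$. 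Conditions (i)--(ii) force the $A'$--$B'$ edges to be the graph of an injection $A'\hookrightarrow B'$, and (iii) forces it to miss a point of $B'$. Thus any $z$ witnessing the formula certifies $|A'|<|B'|$, hence $|N(C_j)|<|N(C_i)|$; this yields the first bullet, using only that $\InC0,\CN,\Hedge$ take their intended meanings with high probability.

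For the second bullet I would appeal to Lemma~\ref{lem:realise}. Under its hypothesis $|N(C_i)|\le\ell$ (together with $|N(C_j)|<|N(C_i)|$) we have $|S|\le 2\ell$, so with high probability \emph{every} target graph on $S$ is realised as $H(S,C_k)$ for some $k>0$. Since $|A'|<|B'|$ there is an injection $A'\hookrightarrow B'$, which automatically has no fixed point as $A'$ and $B'$ are disjoint, and hence a graph $G$ on $S$ consisting of exactly its matching edges. Realising this $G$ by some $C_k$ and taking $z\in C_k$ makes (i)--(iii) hold, so $\Bigger(x,y)$ is satisfied. Intersecting the finitely many high-probability events---correctness of $\InC0,\CN,\Hedge$ and the conclusion of Lemma~\ref{lem:realise}---finishes the argument.

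The step I expect to be the main obstacle is the overlap $N(C_i)\cap N(C_j)$: a naive encoding of an injection $N(C_j)\hookrightarrow N(C_i)$ by an undirected matching breaks down when a vertex lies in both sets, since its degree is then forced to count it in two roles simultaneously. The disjointification $|A|<|B|\iff|A\setminus B|<|B\setminus A|$ is precisely what circumvents this and is the only non-mechanical part of the construction; once it is in place, the translation into $\FO$ and the two probabilistic estimates are routine given the machinery already developed. A minor point to keep in mind is that the bound $|N(C_i)|\le\ell$ in the second bullet is exactly what guarantees $|S|\le 2\ell$ and thereby brings Lemma~\ref{lem:realise} into force.
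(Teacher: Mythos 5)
Your proposal is correct and takes essentially the same route as the paper: the paper's formula $\Bigger$ likewise passes to the disjoint difference sets $N(C_j)\setminus N(C_i)$ and $N(C_i)\setminus N(C_j)$ and asserts the existence of a side clique $C_k$ for which $\Hedge(\cdot,\cdot,z)$ encodes a matching saturating the former while leaving a vertex of the latter unmatched, with Lemma~\ref{lem:realise} supplying the witnessing $C_k$ when $|N(C_i)|\leq\ell$.
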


\begin{proof}
The main idea behind the $\FO$-formula we'll give is that it expresses that there exists
a $k>0$ such that $H(C_j\Delta C_i, C_k)$ is a matching
between $C_j \setminus C_i$ and $C_i \setminus C_j$ that saturates all of $C_j\setminus C_i$, but there is at least
one unmatched vertex in $C_i \setminus C_j$.
The reader can check that the following formula will do the trick (assuming that $\InC0$ and $\CN$ express the correct thing):
$$ 
\renewcommand*{\arraystretch}{1.2}
\begin{array}{rcl} 
\Bigger(x,y) 
& := &  
 \neg \InC0(x) \wedge \neg \InC0(y) \wedge \neg(x=y) \wedge \neg (x\sim y) \\
 & &  \wedge (\exists z : (\forall y_1: (\CN(y_1,y) \wedge \neg \CN(y_1,x) ) \Rightarrow \\
 & & (\exists! x_1: \CN(x_1,x) \wedge \neg \CN(x_1, y) \wedge \Hedge(x_1,y_1,z) ) )  \\
 & &  \wedge (\forall x_1,y_1,y_2 : (\CN(x_1,x) \wedge \neg \CN(x_1, y) \wedge \CN(y_1,y) \\
 & & \wedge \neg \CN(y_1, x) \wedge \CN(y_2,y)  \wedge \neg \CN(y_2, x) \wedge \Hedge(x_1,y_1,z) \\
 & &  \wedge \Hedge(x_1,y_2,z)) \Rightarrow (y_1=y_2)) \\
 & &  \wedge (\exists x_1 : \CN(x_1,x) \wedge \neg \CN(x_1,y) \\
 & & \wedge (\forall y_1: \CN(y_1,y) \wedge \neg \CN(y_1,x) \Rightarrow \neg \Hedge(x_1,y_1,z)))).
   \end{array} $$
\end{proof}

We are now ready to prove the main result.

\begin{proofof}{Theorem~\ref{thm:FO}}
Let $\Phi_i$ denote the formula that Corollary~\ref{cor:transform} produces when applied to the 
sentence $\varphi_i$ from Lemma~\ref{lem:logloglemma}.
We define the following $\FO$-sentence:

$$ \varphi :=
 \exists x,y : \Phi_1(x,y) \wedge \neg( \exists x',y' : \Bigger(x',x) \wedge \Phi_0(x',y') ). 
$$
Up to error probability $o(1)$, the sentence $\varphi$ will hold if and only if~$H( N(C_i), C_j ) \models \varphi_1$ for some $i,j>0$,
and moreover if $H( N(C_{i'}), C_{j'} ) \not\models \varphi_0$ for some $i',j'>0$ then 
$\Bigger(x,x')$ does not hold for any $x\in C_i, x' \in C_{i'}$.
We briefly explain how this implies that $\varphi$ does not have a limiting probability.

First we consider an increasing subsequence $(n_k)_k$ of the natural numbers for which $\log^*n \mood 100 = 75$. 
Observe that 

\begin{equation}\label{eq:logsterel} \log^* n - 10 \leq \log^* \ell  \leq \log^* n. 
\end{equation}

With high probability there are lots of $C_i$ for which $|N(C_i)| = \ell$ by Lemma~\ref{lem:rightsize}, and by 
Lemma~\ref{lem:realise} for each of 
these there is a $j$ such that $H(N(C_i), C_j) \models \varphi_1$ (since $\ell \in \spec(\varphi_1)$ 
as $\log^*\ell \mood 100 \in \{65,\dots, 75\}$ by~\eqref{eq:logsterel} and the choice of $n$).
So there are (lots of) pairs of vertices $x,y$ such that $\Phi_1(x,y)$ holds and $x \in C_i$ for some $i>0$ with $|N(C_i)| = \ell$.
On the other hand, with high probability, for any $x'$ such that $\Bigger(x',x)$ it must hold that $x' \in C_{i'}$ for some $i'>0$ with 
$\ell = |N(C_{i})| < |N(C_{i'})| \leq n$. 
So in particular $\log^*(|N(C_{i'})|) \in \{65,\dots,75\}$.
Thus $|N(C_{i'})| \not\in \spec(\varphi_0)$, which shows that $H(N(C_{i'}), C_{j'}) \not\models\varphi_0$ for any $j'>0$.
In other words, if $\Bigger(x',x)$ holds then there cannot be any $y'$ such that $\Phi_0(x',y')$ holds.
This shows that
$$ \lim_{n\to\infty, \atop \log^*n \mood 100 = 75} \Pee( G_n \models \varphi ) = 1. 
$$

Next, let us consider an increasing subsequence $(n_k)_k$ of the natural numbers for which $\log^* n \mood 100 = 25$.
In this case $\log^*\ell \mood 100 \in \{ 15, \dots, 25 \}$.
In particular $\ell, \dots, n \not\in \spec(\varphi_1)$.
So, with high probability, if there is pair $x,y$ such that $\Phi_1(x,y)$ holds then 
we must have $x \in C_i$ for some $i>0$ with $|N(C_i)|$ strictly smaller than $\ell$.
But then we can again apply Lemma's~\ref{lem:rightsize} and~\ref{lem:realise} to find that, with high probability, 
there exist $x',y'$ with $x' \in C_{i'}, y \in C_{j'}$ for some $i',j'>0$ such that $|N(C_{i'})|=\ell$ and 
$H(N(C_{i'}),C_{j'}) \models \varphi_0$.
Since $\ell = |N(C_{i'})|>|N(C_i)|$, with high probability, $\Bigger(x',x)$ will hold by Corollary~\ref{cor:bigger}. 
This shows that
$$ \lim_{n\to\infty, \atop \log^*n \mood 100 = 25} \Pee( G_n \models \varphi ) = 0. $$
\end{proofof}

\section{Discussion and further work\label{sec:discuss}}

We remark that with very minor variations on our proofs, it can been seen that 
Theorems~\ref{thm:FO} and~\ref{thm:undecidable} also hold for random unipolar and random co-unipolar graphs.
Similarly, by a minor variation of the proof of Theorem~\ref{thm:undecidable}, it can be shown that it is undecidable to determine, given a
sentence $\varphi \in \FO$ that is guaranteed to have limiting probability $\in \{0,1/2\}$ (resp.~$\{1/2,1\}$), 
whether the limit is $1/2$.

Furthermore, by combining Corollary 10.37 from~\cite{purple-book} with a minor variation of our proof of Theorem~\ref{thm:undecidable}
it can be seen that there are formulas with a limiting probability but for which the convergence is extremely slow, in the following
precise sense. For every recursive function $f : \eN \to \eN$ and every $k$ there exists a $\varphi \in \FO$ of quantifier depth 
$\leq k$ (the definition of which can for instance be found in~\cite{purple-book}) such that $\lim_{n\to\infty} \Pee( G_n \models \varphi ) = 1$ yet $\max_{n \leq f(k)} \Pee( G_n \models \varphi ) = o(1)$. 

Recall that having a fixed graph $H$ as an induced subgraph will have limiting probability $0,1/2$ or $1$.
During the exploratory stages of the research that led to the present paper, the last two authors spent some effort trying to 
construct a $\FO$ sentence with a limiting probability $\not\in \{0,1/2,1\}$, without success.
We thus pose this as an open problem to which we would love to know the answer.

\medskip 

\begin{itemize} \item[] {\bf Question.} 
Does there exist a $\varphi \in \FO$ such that $\lim_{n\to\infty} \Pee( G_n \models \varphi )$ exists and takes
on a value other than $0,1/2$ or $1$?
\end{itemize}

\section*{Acknowlegdements}

We warmly thank Tomasz {\L}uczak for helpful discussions which have greatly improved the paper. 
Amongst other things, Tomasz {\L}uczak suggested Theorem~\ref{thm:undecidable} and its proof to us.

This research was started during the BGSMath research program on Random Discrete Structures and Beyond 
that was held in Barcelona from May to June 2017.

\end{document}